\newtheorem{theorem}{Theorem}
\newtheorem*{thm}{Theorem}
\newcommand{\diam}{\operatorname{diam}}
\newcommand{\RR}{\mathbb{R}}
\begin{document}

\title[Sharp Poincar\'{e} inequalities and hypersurface cuts]{Sharp $L^{1}-$Poincar\'{e} inequalities correspond\\ to optimal hypersurface cuts}
\author{Stefan Steinerberger}
\address{Department of Mathematics, Yale University, New Haven, CT 06511, US}
\email{stefan.steinerberger@yale.edu} 
\date{}

\begin{abstract} 
Let $\Omega \subset \mathbb{R}^n$ be convex. If $u: \Omega \rightarrow \mathbb{R}$ has mean 0, then we have the classical Poincar\'{e} inequality
$$ \|u \|_{L^p} \leq c_p \diam(\Omega) \| \nabla u \|_{L^p}$$
with sharp constants $c_2 = 1/\pi$ (Payne \& Weinberger, 1960) and $c_1 = 1/2$ (Acosta \& Duran, 2005) independent of the dimension. The sharp constants
$c_p$ for $1 < p < 2$ have recently been found by Valtorta (2012) and Ferone, Nitsch \& Trombetti (2012). The purpose of this short paper is to prove a much stronger inequality in the endpoint
$L^1$: we combine results of Cianchi and Kannan, Lov\'{a}sz \& Simonovits to show that
$$\left\|u\right\|_{L^{1}(\Omega)} \leq
\frac{2}{\log{2}} M_{}(\Omega) \left\|\nabla u\right\|_{L^{1}(\Omega)}$$
where $M_{}(\Omega)$ is the average distance between a point in $\Omega$ and the center of gravity of $\Omega$. If $\Omega$ is a regular simplex in $\mathbb{R}^n$, this yields
an improvement by a factor of $\sim \sqrt{n}$. By interpolation, this implies that that for every convex $\Omega \subset \mathbb{R}^n$ and every $u:\Omega \rightarrow \mathbb{R}$ with mean 0
$$ \|u \|_{L^p} \leq \left(\frac{2}{\log{2}} M_{}(\Omega) \right)^{\frac{1}{p}} \diam(\Omega)^{1-\frac{1}{p}} \| \nabla u \|_{L^p}.$$
\end{abstract}
\maketitle
\vspace{-10pt}

\section{Introduction} Poincar\'{e} inequalities embody the principle that in order for a function to be large, it has to grow. The question of how to exclude
constants can be handled in different ways: there is the inequality
$$\left\|u\right\|_{L^{p}(\Omega)} \leq C_{1}(\Omega,p)\left\|\nabla u\right\|_{L^{p}(\Omega)}$$
for functions $u$ satisfying Dirichlet conditions. Another way to exclude constants is to enforce vanishing mean on the function and to consider inequalities of the type
$$\left\|u-\frac{1}{|\Omega|}\int_{\Omega}{u(z)dz}\right\|_{L^{p}(\Omega)} \leq C_{2}(\Omega,p)\left\|\nabla u\right\|_{L^{p}(\Omega)}.$$
There is a large body of work on which properties $\Omega$ needs to satisfy for such an inequality to hold as well as
how the constant depends on various geometric quantities. The case $p=1$ is much simpler than the general case: by linearity, one can rearrange the 
gradient along level lines as desired using the coarea formula. It is then natural to concentrate all growth on a
level line that is relatively short while still somehow managing a balanced cut. This easy observation seems to have first been made by
Yau \cite{yau} and, independently, by Cianchi \cite{Cianchi} but only appeared very implicitely in his work (recently more explicitely in \cite{cianchi2}). 

\begin{thm}[Cianchi, \cite{Cianchi}] Let $\Omega \subset \RR^{n}$ be open, bounded and connected. Then we have the sharp inequality
$$\left\|u-\frac{1}{|\Omega|}\int_{\Omega}{u(z)dz}\right\|_{L^{1}(\Omega)} \leq
 \left(\sup_{E \subset \Omega}{\frac{2}{\mathcal{H}^{n-1}(E)}\frac{|S||\Omega \setminus S|}{|\Omega|}}\right)\left\|\nabla u\right\|_{L^{1}(\Omega)},$$
where $E \subset \Omega$ ranges over all surfaces which divide $\Omega$ into two connected subsets $S$ and $\Omega \setminus S$. 
\end{thm}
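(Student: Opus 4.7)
The plan is a coarea-based reduction to a pointwise isoperimetric estimate on level sets. Normalize so that $\bar u := |\Omega|^{-1}\int_\Omega u = 0$, set $S_t := \{u > t\}$, and write $K$ for the supremum on the right-hand side of the claimed inequality. First I would rewrite $\|u\|_{L^1(\Omega)}$ via the identity $u(x) = \tfrac{1}{|\Omega|}\int_\Omega (u(x) - u(y))\,dy$ together with the triangle inequality, then use the layer-cake representation $|u(x) - u(y)| = \int_{-\infty}^{\infty} |\mathbf{1}_{u>t}(x) - \mathbf{1}_{u>t}(y)|\,dt$ and Fubini to obtain
$$\|u\|_{L^1(\Omega)} \;\leq\; \frac{1}{|\Omega|}\int_{\Omega}\!\!\int_{\Omega}|u(x) - u(y)|\,dx\,dy \;=\; \frac{2}{|\Omega|}\int_{-\infty}^{\infty}|S_t|\cdot|\Omega \setminus S_t|\,dt.$$
The BV coarea formula gives $\|\nabla u\|_{L^1(\Omega)} = \int_{-\infty}^\infty \mathcal{H}^{n-1}(\{u = t\})\,dt$, so everything reduces to the pointwise isoperimetric bound
$$\frac{2|S_t|\cdot|\Omega \setminus S_t|}{|\Omega|} \;\leq\; K\cdot\mathcal{H}^{n-1}(\{u=t\}) \qquad \text{for a.e.\ } t \in \RR.$$

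The second step is to deduce this pointwise estimate from the definition of $K$. When both $S_t$ and $\Omega \setminus S_t$ are connected, $E = \{u=t\}$ is admissible in the supremum and the estimate is immediate. Otherwise I would decompose whichever of the two sides is disconnected into its connected components $\{C_j\}$, apply the definition of $K$ to each (its complement in $\Omega$ stays connected, because the other components are joined through the opposite, connected side via pieces of the level set), and sum:
$$\sum_j \frac{2|C_j|(|\Omega| - |C_j|)}{|\Omega|} \;\leq\; K \sum_j \mathcal{H}^{n-1}(\partial C_j \cap \Omega) \;=\; K\cdot\mathcal{H}^{n-1}(\{u=t\}),$$
the last equality by additivity of the perimeter across components. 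The elementary bound $\sum_j |C_j|^2 \leq \bigl(\sum_j |C_j|\bigr)^2$ then shows that the summed left-hand side dominates $2|S_t|\cdot|\Omega\setminus S_t|/|\Omega|$, completing step two.

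The main obstacle is the connectivity bookkeeping just described: one must have one side connected to make the decomposition legitimate, and the doubly-disconnected case requires an iterative refinement or a spanning-tree argument on the bipartite adjacency graph of components. The remaining pieces --- the BV coarea formula reducing everything to almost-every $t$, and sharpness obtained by testing against a mollified indicator of a near-extremal set, whose gradient concentrates on $\partial S$ as the mollification parameter tends to zero --- are standard.
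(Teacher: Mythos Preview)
The paper does not actually prove this statement---it is quoted as Cianchi's theorem, with the remark that ``the proof is quite simple and requires merely the coarea formula,'' and sharpness is explained by mollifying the indicator of an extremal cut. The one-dimensional weighted analogues (Theorems~5 and~6) are proved later via superposition of Heaviside functions, which is precisely the one-dimensional shadow of your layer-cake/coarea argument. So your proposal is exactly the intended route, fleshed out in more detail than the paper provides.

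Your treatment of the one genuine technicality---that the supremum defining $K$ runs only over surfaces giving two \emph{connected} pieces, while level sets $\{u>t\}$ need not have this property---is correct. The singly-disconnected case works as you wrote. For the doubly-disconnected case your spanning-tree suggestion goes through: in the adjacency graph on the components of $S_t$ and $\Omega\setminus S_t$ there is always a leaf component $C$ (a leaf of any spanning tree), and $\Omega\setminus C$ is then connected; apply the definition of $K$ to $C$, apply the inductive hypothesis to $S_t\setminus C$ (or $(\Omega\setminus S_t)\setminus C$, by symmetry), and add. The perimeters add since $C$ is a full component, and the algebraic inequality
\[
|C|(|\Omega|-|C|) + |S'|(|\Omega|-|S'|) \;\ge\; (|C|+|S'|)(|\Omega|-|C|-|S'|)
\]
(equivalent to $2|C|\,|S'|\ge 0$) closes the induction. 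Nothing further is needed.
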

It is easy to see that the constant cannot be improved: take a hypersurface cut realizing the infimum, take a function to be constant
on both parts (with the constants chosen such that the arising function has mean zero) and mollify. We will describe a 'geometry-free'
version of this result in the last section of the paper. We should emphasize that the proof is quite simple and requires merely the coarea formula. It would also be possible to adapt 
the argument of Lefton \& Wei \cite{lefton} for the case of functions
vanishing on the boundary (see also Kawohl \& Fridman \cite{kaw}): they prove that for $f \in H^1_0(\Omega)$
$$ \|u\|_{L^1} \leq \left( \sup_{D \subset \Omega}{\frac{|D|}{|\partial D|}} \right) \|\nabla u\|_{L^1},$$
where $D$ ranges over all subsets, where $\partial D \cap \partial \Omega = \emptyset$. The underlying philosophical 
insight into the structure of these statements dates back at least to the pioneering work of Cheeger \cite{cheeger}.
The purpose of this paper is to point out some implications of Cianchi's formula, which are unknown and improve inequalities that were hithereto considered sharp.
We also condense the underlying insight in 
the form of sharp weighted $L^1-$Poincare inequalities for the unit interval.  

\section{The Poincar\'{e} constant of convex $\Omega$}
Let us restrict ourselves to the case of a convex domain $\Omega \subset \mathbb{R}^n$. 
We first mention a sharp inequality due to Acosta \& Dur\'{a}n \cite{acosta}, which is the $L^1-$analogue of the famous
 Payne-Weinberger inequality \cite{payne} dealing with $p=2$. By now, this inequality is very widely quoted in the study of 
finite element approximations. Recently, Valtorta \cite{val} as well as Ferone, Nitsch \& Trombetti \cite{ferone} 
gave the optimal constants for all $1 < p < 2$. 

\begin{thm}[Acosta \& Dur\'{a}n] Let $\Omega \subset \mathbb{R}^n$ be convex. Then
$$\left\|u-\frac{1}{|\Omega|}\int_{\Omega}{u(z)dz}\right\|_{L^{1}(\Omega)} \leq
\frac{\diam(\Omega)}{2} \left\|\nabla u\right\|_{L^{1}(\Omega)}$$
and the constant $1/2$ cannot be improved.
\end{thm}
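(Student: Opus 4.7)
The plan is to derive this from Cianchi's formula (Theorem~1 above). Applying that formula reduces the claim to the purely geometric inequality
$$\frac{2|S|\,|\Omega\setminus S|}{|\Omega|\,\mathcal{H}^{n-1}(E)} \;\leq\; \frac{\diam(\Omega)}{2} \qquad (\ast)$$
for every hypersurface $E$ partitioning the convex body $\Omega$ into pieces $S$ and $\Omega\setminus S$. Once $(\ast)$ is in hand, the Acosta--Dur\'{a}n bound follows at once.

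To prove $(\ast)$ I would reduce to one dimension. By a standard argument (either iterated Payne--Weinberger bisection or the Kannan--Lov\'{a}sz--Simonovits localization lemma), the problem splits into a family of \emph{needles}: line segments of length at most $\diam(\Omega)$ carrying a log-concave weight $\rho$, where $\rho$ arises from $(n-1)$-dimensional slice areas and its log-concavity is a consequence of Brunn--Minkowski. On a single needle $[0,\ell]$ the inequality $(\ast)$ specializes to the one-dimensional weighted statement
$$\Bigl(\int_{0}^{c}\rho\Bigr)\Bigl(\int_{c}^{\ell}\rho\Bigr) \;\leq\; \frac{\ell}{4}\,\rho(c)\int_{0}^{\ell}\rho \qquad \text{for every } c\in(0,\ell).$$
When $\rho$ is constant this collapses to $c(\ell-c)\leq \ell^{2}/4$, i.e.\ AM--GM, with equality at $c=\ell/2$; this is already the source of the sharp constant $1/2$.

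What remains is to verify the weighted 1D inequality for all log-concave $\rho$, which I would handle by a further rearrangement/localization argument showing that among log-concave densities with prescribed total mass and prescribed value $\rho(c)$ at the cut, the uniform density is extremal. The main obstacle is precisely this step: pinning down the extremal 1D weight and confirming that no factor is lost in the reduction from $\Omega$ to a single needle -- the whole point of the argument is that the constant $\diam(\Omega)/2$ must survive the dimensional descent. Sharpness is witnessed in the limit by a long thin slab $[0,\diam(\Omega)]\times [0,\varepsilon]^{n-1}$ equipped with $u$ a mollified step function at $x_{1}=\diam(\Omega)/2$: as $\varepsilon\to 0$ both sides of Cianchi's inequality approach their extremal values and the ratio $\|u\|_{L^{1}}/\|\nabla u\|_{L^{1}}$ tends to $\diam(\Omega)/2$, so the constant cannot be improved.
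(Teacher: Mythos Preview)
The paper does not actually prove this theorem. It is quoted as a known result of Acosta and Dur\'{a}n \cite{acosta}; the only thing supplied in the text is the sharpness verification via the thin rectangle $[0,1]\times[0,\varepsilon]$, which coincides with the sharpness argument you give at the end of your proposal. So there is no ``paper's own proof'' of the inequality to compare against.

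More importantly, your route runs \emph{against} the logical flow of the paper. The geometric inequality $(\ast)$ you isolate is precisely the content of Theorem~3 (equivalently Theorem~4), and the paper proves Theorem~3 by \emph{combining} Cianchi's identity with the Acosta--Dur\'{a}n bound: it writes
\[
\frac{\diam(\Omega)}{2}\;\ge\;\sup\frac{\|f\|_{L^1}}{\|\nabla f\|_{L^1}}\;=\;\sup_{E}\frac{2}{\mathcal{H}^{n-1}(E)}\frac{|S||\Omega\setminus S|}{|\Omega|}
\]
and reads off $(\ast)$. In other words, in this paper $(\ast)$ is a \emph{consequence} of Acosta--Dur\'{a}n, not a lemma towards it; using $(\ast)$ to prove Acosta--Dur\'{a}n here would be circular.

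Taken on its own merits, your plan is reasonable but incomplete exactly where you say it is. The one-dimensional weighted inequality
\[
\Bigl(\int_0^c\rho\Bigr)\Bigl(\int_c^\ell\rho\Bigr)\;\le\;\frac{\ell}{4}\,\rho(c)\int_0^\ell\rho
\]
for log-concave $\rho$ is the whole difficulty, and ``uniform is extremal among log-concave densities with prescribed mass and prescribed value at $c$'' is asserted rather than argued (note, too, that a uniform density with value $\rho(c)$ on $[0,\ell]$ generally does not match a prescribed total mass, so the extremality claim needs a more careful formulation). The original Acosta--Dur\'{a}n argument does indeed proceed by a Payne--Weinberger style decomposition into thin slabs, but it works directly with the $L^1$ Poincar\'{e} functional and exploits that the cross-sectional weight $\rho$ satisfies $\rho^{1/(n-1)}$ concave (Brunn--Minkowski), which is strictly stronger than log-concavity; that extra concavity is what makes the one-dimensional step go through. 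If you want to push your variant via $(\ast)$, you should either restrict to power-concave weights from the start, or supply a genuine proof of the log-concave one-dimensional inequality.
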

The inequality is easily seen to be sharp by considering the unit interval or, by extension, long, thin cylinders  in arbitrary space dimension:
let $\Omega = [0,1] \times [0, \varepsilon]$ for $\varepsilon \rightarrow 0$ and consider the function $u$
as a mollification of
$$ (\chi_{[0, 1/2]} - \chi_{[1/2 , 1]})(x),$$
where $\chi$ denotes a characteristic function.  Then, as the degree of mollification and $\varepsilon$ tend to 0, a simple calculation
shows that
$$ \left\|u-\frac{1}{|\Omega|}\int_{\Omega}{u(z)dz}\right\|_{L^{1}(\Omega)} = \varepsilon$$
while
$$ \diam(\Omega) = 1 \qquad \mbox{and} \qquad   \left\|\nabla u\right\|_{L^{1}(\Omega)} = 2\varepsilon.$$

\subsection{An improvement}  While the Acosta-Dur\'{a}n inequality is sharp, we will now show that the diameter
is not the sharp geometric quantity: the inequality remains true if we replace the diameter by the average distance 
to the center of mass: this new quantity can be an arbitrary factor smaller than
the diameter if the convex set is in sufficiently high dimensions.
\begin{theorem} Let $\Omega \subset \mathbb{R}^n$ be a convex domain. Then
$$\left\|u-\frac{1}{|\Omega|}\int_{\Omega}{u(z)dz}\right\|_{L^{1}(\Omega)} \leq
\frac{2}{\log{2}} M_{}(\Omega) \left\|\nabla u\right\|_{L^{1}(\Omega)}$$
where $M_{}(\Omega)$ is the average distance between a point in $\Omega$ and the center of gravity of $\Omega$.
\end{theorem}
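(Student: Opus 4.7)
The plan is to combine Cianchi's reformulation of the $L^1$-Poincar\'{e} inequality (quoted in the introduction) with the isoperimetric inequality of Kannan, Lov\'{a}sz and Simonovits for convex bodies. The abstract explicitly announces this two-ingredient strategy, and once both pieces are placed side by side, the derivation becomes essentially a one-line consolidation.

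First I would rewrite the desired estimate using Cianchi's theorem: it reduces the claim to the purely geometric statement that every hypersurface $E$ splitting $\Omega$ into two connected pieces $S$ and $\Omega \setminus S$ satisfies
$$\frac{2}{\mathcal{H}^{n-1}(E)} \cdot \frac{|S|\,|\Omega \setminus S|}{|\Omega|} \;\leq\; \frac{2}{\log 2}\, M(\Omega),$$
or equivalently
$$\mathcal{H}^{n-1}(E) \;\geq\; \frac{\log 2}{M(\Omega)} \cdot \frac{|S|\,|\Omega \setminus S|}{|\Omega|}.$$
Thus the Poincar\'{e} inequality is reduced to a Cheeger-type isoperimetric problem for the convex body $\Omega$.

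This reduced inequality is precisely the form of the KLS isoperimetric bound for convex bodies, with the mean-distance quantity $M(\Omega)$ being exactly the first moment that appears in their analysis. I would therefore invoke KLS directly as a black box; the localization lemma, which is the technical heart of their work, supplies the constant $\log 2$. Taking the supremum over $E$ in Cianchi's formula then yields the theorem.

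In this sense, the theorem has no real technical obstacle on the author's side: everything substantive is absorbed into KLS. The interest of the result lies not in the depth of the derivation but in the conceptual dictionary it exposes, namely that Cianchi's sharp reformulation of the $L^1$-Poincar\'{e} constant is essentially dual to the KLS Cheeger constant of a convex body. The only point that requires care is confirming that the right-hand side in KLS is indeed of the product form $|S||\Omega\setminus S|/|\Omega|$ (rather than the often-quoted $\min(|S|,|\Omega \setminus S|)$, which would give a weaker constant); a quick dimension-one check on $[0,1]$ pins down the correct constant $\log 2$ and confirms that the two right-hand sides match up exactly.
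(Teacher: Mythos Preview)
Your proposal is correct and follows exactly the paper's own proof: quote Cianchi's sharp formula for the $L^1$-Poincar\'{e} constant and then bound the resulting isoperimetric quantity by $M(\Omega)/\log 2$ via the Kannan--Lov\'{a}sz--Simonovits inequality. There is nothing to add; the paper's proof is the same one-line consolidation you describe.
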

\begin{proof} Cianchi's theorem states that
$$\left\|u-\frac{1}{|\Omega|}\int_{\Omega}{u(z)dz}\right\|_{L^{1}(\Omega)} \leq
 \left(\sup_{E \subset \Omega}{\frac{2}{\mathcal{H}^{n-1}(E)}\frac{|S||\Omega \setminus S|}{|\Omega|}}\right)\left\|\nabla u\right\|_{L^{1}(\Omega)}.$$
The relevant isoperimetric quantity has been independently studied by Kannan, Lov\'{a}sz \& Simonovits \cite{kannan}, who prove that
 $$\sup_{E}{\frac{|S||\Omega \setminus S|)}{\mathcal{H}^{n-1}(E)|\Omega|}} \leq \frac{M_{}(\Omega)}{\log{2}},$$
where $M_{}(\Omega)$ is the average distance between a point in $\Omega$ and the center of gravity of $\Omega$.
\end{proof}
 
The average distance between a point and the center of gravity is trivially bounded by the diameter, however,
if we consider the regular simplex $P$ with comprised of $n+1$ points in $\mathbb{R}^n$
with diameter 1, then using a classical result (see e.g. Conway \& Sloane \cite[Chapter 21 2.E]{conw})$$
 \frac{1}{|P|}\int_{P}{\|x\|dx} \leq  \left(\frac{1}{|P|}\int_{P}{\|x\|^2dx} \right)^{\frac{1}{2}} = \sqrt{\frac{n}{2(n+1)(n+2)}} \diam(P).$$
This yields an improvement by a factor of $ \sim n^{-1/2}$ in $n$ dimensions.

\subsection{A general improvement.} We can now combine our result with a trivial consequence of the mean-value theorem:
we have that for $u \in C^1(\Omega)$ with vanishing mean
\begin{align*}
\left\|u\right\|_{L^{1}(\Omega)} &\leq \frac{2}{\log{2}} M_{}(\Omega) \left\|\nabla u\right\|_{L^{1}(\Omega)} \\
\left\|u\right\|_{L^{\infty}(\Omega)} &\leq \diam(\Omega) \left\|\nabla u\right\|_{L^{\infty}(\Omega)},
\end{align*}
where the second inequality would also be valid for any function that vanishes at some point inside of $\Omega$.
 Riesz-Thorin interpolation implies the following improvement for $L^p$ with $1 < p < \infty$.
\begin{theorem} Let $\Omega \subset \mathbb{R}^n$ be a convex domain. Then
$$\left\|u-\frac{1}{|\Omega|}\int_{\Omega}{u(z)dz}\right\|_{L^{p}(\Omega)} \leq
\left(\frac{2}{\log{2}} M_{}(\Omega)\right)^{\frac{1}{p}} 
\left(\diam(\Omega)\right)^{1-\frac{1}{p}} \left\|\nabla u\right\|_{L^{p}(\Omega)}$$
where $M_{}(\Omega)$ is the average distance between a point in $\Omega$ and the center of gravity of $\Omega$.
\end{theorem}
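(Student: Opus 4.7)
The plan is to recognize the desired inequality as the interpolation of two endpoint bounds: the $L^{1}$ Poincaré inequality of Theorem 3 and a straightforward $L^{\infty}$ bound. Concretely, both are operator-norm estimates for the identity map $u \mapsto u$ from the space of mean-zero $C^{1}$ functions, normed by $\|\nabla u\|_{L^{q}}$, into $L^{q}(\Omega)$, at the endpoints $q=1$ and $q=\infty$.

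First, I would record the two endpoint inequalities for $u \in C^{1}(\Omega)$ with $\int_{\Omega}u=0$. The $L^{1}$ endpoint is precisely Theorem 3. The $L^{\infty}$ endpoint, already noted in the paper, follows by observing that continuity of $u$ and the mean-zero condition on the connected set $\Omega$ force some $x_{0}\in\Omega$ with $u(x_{0})=0$; convexity then allows integration of $\nabla u$ along the segment $[x_{0},x]$, giving
$$|u(x)| = \left|\int_{0}^{1}\langle \nabla u(x_{0}+t(x-x_{0})),\, x-x_{0}\rangle\,dt\right| \leq \diam(\Omega)\,\|\nabla u\|_{L^{\infty}(\Omega)}.$$
Next, apply the Riesz--Thorin theorem to the linear operator $T$ sending $\nabla u$ to $u$, defined on the subspace $V$ of curl-free vector fields arising as gradients of mean-zero $C^{1}$ functions (well-defined since the mean-zero normalization picks out a unique primitive). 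The two endpoint bounds say exactly $\|T\|_{L^{1}\to L^{1}} \leq \tfrac{2}{\log 2}M(\Omega)$ and $\|T\|_{L^{\infty}\to L^{\infty}} \leq \diam(\Omega)$. Choosing $\theta = 1 - 1/p$ so that $\tfrac{1}{p} = (1-\theta)\cdot 1 + \theta\cdot 0$, Riesz--Thorin yields
$$\|T\|_{L^{p}\to L^{p}} \leq \left(\tfrac{2}{\log 2}M(\Omega)\right)^{1/p} \diam(\Omega)^{1-1/p},$$
which, applied to $\nabla u$, is exactly the claim.

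The only genuinely technical point is that $T$ is defined only on the subspace of curl-free vector fields rather than on all of $L^{p}(\Omega,\RR^{n})$: the Riesz--Thorin construction $f_{z}=|f|^{p/p(z)}\operatorname{sgn}(f)$ does not preserve being a gradient, so the three-lines argument must be applied to the subcouple with a touch of care. This is settled by the standard interpolation identity for mean-zero Sobolev spaces on convex (or more generally Lipschitz) domains — essentially, $\nabla$ and its mean-zero left-inverse form a retraction-coretraction pair, so complex interpolation commutes with taking gradients. I do not foresee any real difficulty beyond this bookkeeping; the substantive work is entirely contained in Theorem 3 and the elementary $L^{\infty}$ estimate, and the rest is a routine application of interpolation.
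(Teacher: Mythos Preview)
Your approach is exactly the paper's: record the $L^{1}$ endpoint (Theorem~1 in the paper's numbering) and the elementary $L^{\infty}$ bound coming from a zero of $u$ plus integration along a segment, then invoke Riesz--Thorin. If anything you are more careful than the paper, which simply asserts that ``Riesz--Thorin interpolation implies'' the result without commenting on the subspace issue you correctly flag.
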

This improves the classical inequality with a constant depending on the diameter. The powers on the geometric quantities are optimal in the endpoints
in $p=1$ and $p=\infty$ (where the regular simplex in $\mathbb{R}^n$ with diameter 1 has a $L^{\infty}-$Poincar\'{e} constant that does not decay as the dimension increases while the above computation shows that $M(\Omega)$ tends to 0); it
seems reasonable to conjecture that they are also optimal in the intermediate range but this we do not know.

\section{Hypersurface cuts} 
Given any subset $S$ of a convex body $\Omega$, it is intuitively clear that
the boundary $\partial S$ cannot be too small unless $S$ itself is either very small or contains almost all of $\Omega$
(in which case $\Omega \setminus S$ is small).
The problem of quantifying this simple notion
was formulated in 1989 by Dyer, Frieze \& Kannan \cite{kan}, who also conjectured a lower bound. A first result
states that for convex $\Omega$ and any subset $S \subseteq \Omega$
$$\mathcal{H}^{n-1}\left(\partial S \cap \Omega\right) \geq \frac{1}{\diam(\Omega)}\min{(|S|, |\Omega \setminus S|)}$$
and was, using different methods, independently shown by Lov\'{a}sz \& Simonovits \cite{simon} and Karzanov \& Khachiyan \cite{kar}. 
Note the similarity to Polya's longest shortest fence problem (recently solved by Esposito, Ferone, Kawohl, Nitsch \& Trombetti \cite{esp}).

\begin{center}
\begin{figure}[h!]
\centering
\begin{tikzpicture}[scale=1]
\draw [ultra thick] (0,0) circle (1.5cm);
\draw [thick] (1.5,0) to[out=180,in=300] (-0.707*1.5,0.707*1.5);
\node at (0.4,0.7) {$S$};
\node at (0.1,-0.7) {$\Omega \setminus S$};

\draw [ultra thick] (6,0) ellipse (70pt and 15pt);
\draw [thick] (6.5,-0.5) to[out=100,in=260] (6.5,0.5);
\node at (7,0) {$S$};
\node at (5,0) {$\Omega \setminus S$};
\end{tikzpicture}
\caption{Seperating a convex set into two sets of comparable volume requires a large hypersurface unless $\Omega$ is close to a thin rectangle.}
\end{figure}
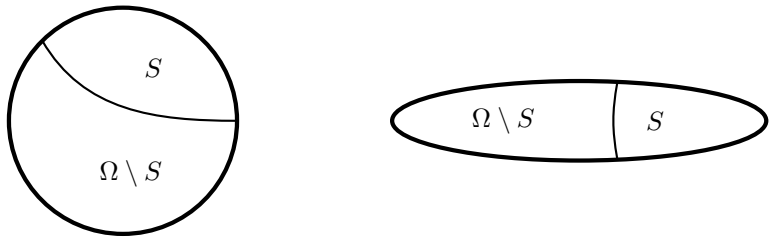
\end{center}
\vspace{-10pt}
This statement precludes the existence of bottlenecks -- it is of utmost importance in proving that random-walk based approximation
algorithms (which could be trapped in some regions if a bottleneck was present) can compute the volume of convex 
bodies in polynomial time; for their seminal work, Dyer, Frieze \& Kannan were awarded the Fulkerson prize in 1991.
The sharp constant in the statement was given by Dyer \& Frieze \cite{dyer}, sharper general bounds are given in a paper of Gromov \& Milman \cite{gro}.

\begin{thm}[Dyer \& Frieze] Let $\Omega \subset \mathbb{R}^n$ be a convex
domain and $S \subset \Omega$. Then
$$\mathcal{H}^{n-1}\left(\partial S \cap \Omega\right) \geq \frac{2}{\diam(\Omega)}\min{(|S|, |\Omega \setminus S|)}$$
and the constant 2 is optimal.
\end{thm}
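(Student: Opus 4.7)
The plan is to deduce the statement as a one-line consequence of two results already quoted above: Cianchi's identification of the sharp $L^{1}$-Poincar\'e constant as a supremum over hypersurface cuts, and the Acosta--Dur\'an bound $\diam(\Omega)/2$ for this constant on convex domains. The key observation is that Cianchi's theorem can be read as a lower bound on $\mathcal{H}^{n-1}(E)$ for \emph{every} admissible cut, since an individual term trivially lies below the supremum that equals the sharp Poincar\'e constant.

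Concretely, for any cut $E$ separating $\Omega$ into $S$ and $\Omega\setminus S$, the combination of the two theorems gives
$$\frac{2}{\mathcal{H}^{n-1}(E)}\cdot\frac{|S||\Omega\setminus S|}{|\Omega|} \ \leq\ \frac{\diam(\Omega)}{2},$$
so that
$$\mathcal{H}^{n-1}(E) \ \geq\ \frac{4|S||\Omega\setminus S|}{\diam(\Omega)\,|\Omega|}.$$
To convert this product into the claimed $\min$-expression, I would note that $|S|+|\Omega\setminus S|=|\Omega|$ forces the larger of the two to be at least $|\Omega|/2$, hence $|S||\Omega\setminus S| \geq \tfrac{|\Omega|}{2}\min(|S|,|\Omega\setminus S|)$. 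Substituting and cancelling $|\Omega|$ yields the inequality with constant $2$.

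For the sharpness claim, the thin-cylinder example that already certified sharpness of Acosta--Dur\'an reappears verbatim: take $\Omega = [0,1]\times[0,\varepsilon]^{n-1}$ and cut by the hyperplane $\{x_{1}=1/2\}$. Then $\min(|S|,|\Omega\setminus S|) = \varepsilon^{n-1}/2$, $\mathcal{H}^{n-1}(\partial S\cap\Omega) = \varepsilon^{n-1}$, and $\diam(\Omega)\to 1$, so that the ratio between the two sides tends exactly to $2$; equivalently, the chain of two inequalities above is saturated in the limit.

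I do not anticipate a serious obstacle, since the statement is arithmetically equivalent to the conjunction of Cianchi's formula and Acosta--Dur\'an, both of which are available. The only point requiring care is admissibility of $\partial S\cap\Omega$ as a competitor in Cianchi's supremum: for an arbitrary Borel set $S$ one interprets $\mathcal{H}^{n-1}(\partial S\cap\Omega)$ as the De~Giorgi perimeter and approximates $\chi_{S}$ by a sequence of smooth functions whose level sets realise cuts of near-minimal surface area, which is standard.
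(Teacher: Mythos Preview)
Your argument is correct and matches the paper's approach. The paper does not give an independent proof of the Dyer--Frieze inequality (it is cited as a known result), but immediately afterwards the paper proves the stronger product-form bound (its Theorem~3) by exactly the chain you write down---combining Cianchi's formula with the Acosta--Dur\'an bound---and then observes via the identity
\[
\frac{4}{\diam(\Omega)}\frac{|S||\Omega\setminus S|}{|\Omega|}
= \frac{2}{\diam(\Omega)}\min(|S|,|\Omega\setminus S|)\cdot\frac{2\max(|S|,|\Omega\setminus S|)}{|\Omega|}
\]
that this implies Dyer--Frieze; your thin-cylinder sharpness example is likewise the one the paper uses.
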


The statement is again seen to be sharp by considering long, thin cylinders, where it is optimal to take $S$ to be
one half of the cylinder. If we let again $\Omega = [0,1] \times [0, \varepsilon]$ and take $S = (0, 1/2) \times (0, \varepsilon)$, then
 $$ \mathcal{H}^{1}\left(\partial S \cap \Omega\right) = \varepsilon \quad \mbox{and} \quad  \min{(|S|, |\Omega \setminus S|)} = 1/2.$$
Since $\varepsilon$ can be taken to be arbitrarily small, we can make the diameter to be as close to 1 as we wish and we see
again that at this level of generality the constant 2 cannot be improved.

\subsection{An improvement} We note that the Dyer-Frieze inequality cannot be sharp if $|S| \neq |\Omega|/2$ by proving a stronger
inequality. This is made possible by advantageously changing the 
algebraic structure of the bound. Equality is again attained for cylinders but thanks to the different algebraic structure we gain up to a factor of 2 as soon as $|S|/|\Omega|$ is close to either 0 or 1.

\begin{theorem}  Let $\Omega$ be convex and $S \subseteq \Omega$. Then
$$\mathcal{H}^{n-1}\left(\partial S \cap \Omega\right) \geq \frac{4}{\diam(\Omega)}\frac{|S||\Omega \setminus S|}{|\Omega|}$$
and the constant 4 is optimal.
\end{theorem}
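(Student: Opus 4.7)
The plan is to derive the inequality as an immediate consequence of the two previous theorems stated in the excerpt, run in the opposite direction from how Theorem~3 was proved. Cianchi's theorem gives
$$\left\|u-\frac{1}{|\Omega|}\int_{\Omega}u\,dz\right\|_{L^{1}(\Omega)} \leq \left(\sup_{E}\frac{2}{\mathcal{H}^{n-1}(E)}\frac{|S||\Omega\setminus S|}{|\Omega|}\right)\left\|\nabla u\right\|_{L^{1}(\Omega)},$$
and moreover, as emphasized in the paragraph after Cianchi's statement, this constant is \emph{sharp}: it is attained in the limit by mollifying the two-valued function that is constant on each side of a minimizing hypersurface cut. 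The Acosta--Dur\'an inequality, on the other hand, is an absolute upper bound of $\diam(\Omega)/2$ for the $L^1$-Poincar\'e constant on convex $\Omega$. Comparing the two constants yields
$$\sup_{E}\frac{2}{\mathcal{H}^{n-1}(E)}\frac{|S||\Omega\setminus S|}{|\Omega|} \leq \frac{\diam(\Omega)}{2},$$
and rearranging this inequality for an arbitrary admissible cut $E=\partial S\cap\Omega$ produces exactly
$$\mathcal{H}^{n-1}(\partial S\cap\Omega) \geq \frac{4}{\diam(\Omega)}\frac{|S||\Omega\setminus S|}{|\Omega|}.$$

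For the optimality of the constant $4$, I would reuse the thin cylinder that already demonstrated sharpness in the Acosta--Dur\'an theorem: take $\Omega=[0,1]\times[0,\varepsilon]$ and cut it with the vertical segment at $x=1/2$, so $S=[0,1/2]\times[0,\varepsilon]$. A direct computation gives $\mathcal{H}^{1}(\partial S\cap\Omega)=\varepsilon$, $|S|=|\Omega\setminus S|=\varepsilon/2$, $|\Omega|=\varepsilon$, and $\diam(\Omega)\to 1$ as $\varepsilon\to 0$. Then
$$\frac{4}{\diam(\Omega)}\frac{|S||\Omega\setminus S|}{|\Omega|} = \frac{4}{1}\cdot\frac{(\varepsilon/2)^{2}}{\varepsilon} = \varepsilon = \mathcal{H}^{1}(\partial S\cap\Omega),$$
so the constant $4$ cannot be replaced by anything larger, even allowing $\diam$ to deform.

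There is essentially no obstacle here once one has Cianchi's sharp identification of the $L^1$-Poincar\'e constant with the hypersurface-cut supremum; the whole content is that Cianchi's formula packages the geometric inequality and the functional inequality as equivalent statements, so the Acosta--Dur\'an bound on one side immediately dualizes to a Dyer--Frieze-type bound on the other. The only point worth double-checking in the write-up is that the class of cuts $E$ in Cianchi's theorem (hypersurfaces splitting $\Omega$ into two connected pieces) matches the class of subsets $S$ considered in the Dyer--Frieze statement; for convex $\Omega$ this causes no problem because we may apply the inequality to each connected component of $S$ separately, and the stronger (product) form of the right-hand side behaves well under this decomposition.
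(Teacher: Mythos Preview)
Your proof is correct and follows essentially the same route as the paper: combine the Acosta--Dur\'an upper bound $\diam(\Omega)/2$ for the $L^1$-Poincar\'e constant with Cianchi's sharp identification of that constant as $\sup_E 2|S||\Omega\setminus S|/(\mathcal{H}^{n-1}(E)|\Omega|)$, then rearrange; optimality via the thin cylinder is also what the paper uses. Your closing remark about matching the class of cuts in Cianchi's theorem with arbitrary subsets $S$ is a point the paper does not address explicitly.
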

Note that we can rewrite the right-hand side as
$$ \frac{4}{\diam(\Omega)}\frac{|S||\Omega \setminus S|}{|\Omega|} = \underbrace{\frac{2}{\diam(\Omega)}\min{(|S|, |\Omega \setminus S|)}}_{\mbox{Dyer-Frieze}} \cdot
\underbrace{\frac{2\max{(|S|, |\Omega \setminus S|)}}{|\Omega|}}_{\geq 1}.$$
\begin{proof}
 We identify the boundary $\partial S \cap \Omega$ with a hypersurface seperating $\Omega$ in two parts $S$ and $\Omega \setminus S$. 
Then, using the results of Acosta-Dur\'{a}n and Cianchi, we have
$$ \frac{d}{2} \geq \sup{\frac{\| f \|_{L^1}}{\| \nabla f\|_{L^1}}} = 
\sup_{E \subset \Omega}{\frac{2}{\mathcal{H}^{n-1}(E)}\frac{|S||\Omega \setminus S|}{|\Omega|}} 
\geq \frac{2}{\mathcal{H}^{n-1}(\partial S \cap \Omega)}\frac{|S||\Omega \setminus S|}{|\Omega|}.$$
Rearranging the terms yields the statement.
\end{proof}

\section{Convex geometry.}  There is a another type of result in the literature: while the statements on hypersurface cuts
were formulated in terms of merely $n-$ and $(n-1)-$dimensional measure, one could also consider other geometric
quantities such as the diameter. The direct study of the isoperimetric coefficient 
$$\sup_{E}{\frac{|S||\Omega \setminus S|}{\mathcal{H}^{n-1}(E)}},$$
where $E$ ranges over all $C^{\infty}-$surfaces dividing $\Omega$ into two disjoint parts $S$ abd $\Omega \setminus S$ 
seems to have been initiated by Bokowski \& Sperner \cite{bokowski}. Bokowski \cite{bokowski2} proved, for example,
$$\sup_{E}{\frac{|S||\Omega \setminus S|}{\mathcal{H}^{n-1}(E)}} \leq \left(1-\frac{1}{2^n}\right)\frac{(n-1)}{n(n+1)}\omega_{n-1}\diam(\Omega)^{n+1},$$
where $\omega_{n-1}$ is the volume of the unit $(n-1)-$ball. Further results are due to Santal\'{o} \cite{santalo}, Gysin \cite{gysin} and Mao \cite{mao}. 
\subsection{An improvement.} We are able to complement existing results on hypersurface cuts in a sharp form: the extremal example is again given by long, thin cylinders. This result is generally weaker than 
the inequality of Kannan, Lov\'{a}sz \& Simonovits, we note it here merely because its constant is sharp and it
improves statements from the third line of research (i.e. Bokowski \& Sperner \cite{bokowski}, Bokowski \cite{bokowski2}, ...).
\begin{theorem} Let $\Omega \subset \RR^n$ be convex and let $E \subset \Omega$ be a separating hypersurface splitting $\Omega$ into $S$ and $\Omega \setminus S$. Then we have
 $$\sup_{E}{\frac{|S||\Omega \setminus S|}{\mathcal{H}^{n-1}(E)}} \leq \frac{\diam(\Omega)}{4}|\Omega|,$$
where the constant $1/4$ cannot be improved.
\end{theorem}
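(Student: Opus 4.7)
The plan is to combine the two sharp inequalities already collected in the paper: Cianchi's theorem, which identifies the sharp $L^{1}$-Poincaré constant with a supremum of isoperimetric ratios, and the Acosta-Durán theorem, which bounds that Poincaré constant by $\diam(\Omega)/2$ for convex $\Omega$. The point is that Cianchi's supremum \emph{equals} the sharp Poincaré constant, so any upper bound on the latter immediately becomes an upper bound on the former. This is the same strategy used in the proof of the improved Dyer-Frieze inequality earlier in the paper, just read in reverse.

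Concretely, first I would write down Cianchi's identification of the sharp $L^{1}$-Poincaré constant:
\[
\sup_{u}\frac{\|u - \text{mean}(u)\|_{L^{1}(\Omega)}}{\|\nabla u\|_{L^{1}(\Omega)}} \;=\; \sup_{E \subset \Omega}\frac{2}{\mathcal{H}^{n-1}(E)}\frac{|S|\,|\Omega\setminus S|}{|\Omega|}.
\]
Then I would invoke Acosta-Durán to bound the left-hand side by $\diam(\Omega)/2$, so that
\[
\sup_{E \subset \Omega}\frac{2}{\mathcal{H}^{n-1}(E)}\frac{|S|\,|\Omega\setminus S|}{|\Omega|} \;\leq\; \frac{\diam(\Omega)}{2}.
\]
Multiplying through by $|\Omega|/2$ and taking the supremum over separating hypersurfaces $E$ yields the stated inequality.

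For sharpness I would use the same family of long thin cylinders used to check sharpness of Acosta-Durán. Take $\Omega_{\varepsilon} = [0,1] \times [0,\varepsilon]^{n-1}$ and let $E$ be the flat midslice $\{1/2\} \times [0,\varepsilon]^{n-1}$, so that $S$ and $\Omega \setminus S$ are the two halves. A direct computation gives $|S| = |\Omega \setminus S| = \varepsilon^{n-1}/2$, $|\Omega| = \varepsilon^{n-1}$, $\mathcal{H}^{n-1}(E) = \varepsilon^{n-1}$, and $\diam(\Omega_{\varepsilon}) \to 1$ as $\varepsilon \to 0$, so the ratio between the two sides of the theorem tends to $1$. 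Hence the constant $1/4$ cannot be improved.

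I do not anticipate a genuine obstacle: every ingredient is already on the table, and the argument is essentially algebraic rearrangement of Cianchi plus Acosta-Durán, which is why the paper lists this result as a complement to the other two rather than as a new analytic input. The only thing to be careful about is that the supremum in Cianchi's identity is actually \emph{attained in the limit} (via the mollification recipe described after the statement of Cianchi's theorem), so that sharpness for the Poincaré inequality on cylinders does transfer to sharpness for the hypersurface-cut inequality without loss.
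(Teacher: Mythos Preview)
Your proposal is correct and matches the paper's own proof essentially line for line: the paper likewise derives the inequality directly from Cianchi's equivalence together with the Acosta--Dur\'{a}n bound, and establishes sharpness via the collapsing thin rectangle with the midslice cut. There is nothing to add.
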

\begin{proof}
The result follows again from Cianchi's equivalence and the inequality of Acosta-Dur\'{a}n. Considering again
a thin rectangle, taking $S$ to be half of the rectangle and letting the rectangle collapse to a line, we see that
the constant is sharp.
\end{proof}

\section{Weighted $L^1-$Poincar\'{e} inequalities on the unit interval}

The purpose of this section is to state condense the relevant insight into a geometry-free form: this gives sharp weighted
$L^1-$Poincar\'{e} inequalities on the unit interval for an arbitrary nonvanishing weight: let $\nu:[0,1] \rightarrow \mathbb{R}_{+}$ be a nonvanishing weight $\nu(x) > 0$ on the closed unit interval. We are interested in
sharp Poincar\'{e} constants for the inequality
$$ \int_{0}^{1}{|f(x)|\nu(x) dx} \leq C\int_{0}^{1}{|\nabla f(x)|\nu(x) dx}.$$
We need to exclude constant functions and, as before, we can do so prescribing either Dirichlet conditions or vanishing mean value.

\begin{theorem} Let $\nu:[0,1] \rightarrow \mathbb{R}_{+}$ be a nonvanishing continuous weight on the closed unit interval.
 Let $f:[0,1] \rightarrow \mathbb{R}$ be once differentiable and satisfy $f(0) = 0$. Then
$$ \int_{0}^{1}{|f(x)|\nu(x) dx} \leq \left(\max_{0 \leq x \leq 1}{\frac{1}{\nu(x)}\int_{x}^{1}{\nu(z)dz}}\right)\int_{0}^{1}{|f'(x)|\nu(x) dx}$$
and the constant is sharp.
\end{theorem}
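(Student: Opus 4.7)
The plan is to reduce the inequality to a one-dimensional integration by parts/Fubini argument, exploiting the fact that $f(0) = 0$ gives a clean representation of $f$ as the integral of $f'$.

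First I would write, using $f(0)=0$,
$$|f(x)| \leq \int_0^x |f'(t)|\,dt,$$
and then multiply by $\nu(x)$ and integrate against $dx$ on $[0,1]$. A single application of Fubini's theorem converts the resulting double integral into
$$\int_0^1 |f(x)|\nu(x)\,dx \leq \int_0^1 |f'(t)| \left(\int_t^1 \nu(x)\,dx\right) dt.$$
The key algebraic step is then to insert a factor of $\nu(t)/\nu(t)$ (legitimate since $\nu$ is strictly positive on the closed interval) to rewrite the right-hand side as
$$\int_0^1 |f'(t)|\nu(t) \cdot \frac{1}{\nu(t)}\int_t^1 \nu(x)\,dx\,dt,$$
and bound the factor $\frac{1}{\nu(t)}\int_t^1 \nu(x)\,dx$ by its maximum over $t \in [0,1]$. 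This gives precisely the claimed inequality.

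For sharpness, I would let $x^\ast \in [0,1]$ be a point at which $t \mapsto \nu(t)^{-1}\int_t^1 \nu(x)\,dx$ attains its maximum (which exists by continuity of $\nu$ and compactness), and test the inequality against a family $f_\varepsilon$ of smooth functions approximating the indicator $\chi_{[x^\ast,1]}$, with $f_\varepsilon = 0$ on $[0,x^\ast-\varepsilon]$, $f_\varepsilon = 1$ on $[x^\ast,1]$, and monotonically increasing on $[x^\ast-\varepsilon,x^\ast]$. Then $\int_0^1 |f_\varepsilon|\nu\,dx \to \int_{x^\ast}^1 \nu(z)\,dz$, while $\int_0^1 |f'_\varepsilon|\nu\,dx \to \nu(x^\ast)$ by continuity of $\nu$, so the ratio tends exactly to the asserted constant. (If $x^\ast = 1$ the constant is $0$ and the inequality is trivial, so one may assume $x^\ast < 1$.)

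The main obstacle is really just the sharpness statement, and even that is mild: one has to check that the mollified indicator at $x^\ast$ is a legitimate Dirichlet test function (it vanishes at $0$ since $x^\ast > 0$ can be assumed, otherwise the max factor is $\int_0^1 \nu$ and one concentrates $f'$ near $0$ instead), and that continuity of $\nu$ makes the limit of $\int|f'_\varepsilon|\nu$ equal $\nu(x^\ast)$ rather than some average. Everything else is just Fubini and pulling a supremum out of an integral.
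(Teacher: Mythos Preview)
Your argument is correct. The inequality follows exactly as you say from $|f(x)|\le\int_0^x|f'|$, Fubini, and pulling out the supremum; the sharpness construction via mollified indicators at a maximizing point $x^\ast$ is also fine (your case analysis for $x^\ast=0$ and $x^\ast=1$ handles the edge cases).

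The paper takes a slightly different route: it first replaces $f$ by the monotone envelope $g(x)=\sup_{z\le x}|f(z)|$ (noting $|g|\ge|f|$ and $|g'|\le|f'|$), interprets $\int g'\nu$ as a Riemann--Stieltjes integral $\int\nu\,dg$, checks the inequality on shifted Heaviside functions $H_z$, and then writes $g=\int H_z\,dg(z)$ to conclude by linearity. Computationally this is the same Fubini step you perform, but packaged differently. Your approach is more elementary---the pointwise bound $|f(x)|\le\int_0^x|f'|$ makes the monotone reduction and the Stieltjes machinery unnecessary. What the paper's decomposition buys is an explicit description of near-extremizers: any near-optimal $f$ can only grow where $\nu(z)^{-1}\int_z^1\nu$ is near its maximum. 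Your argument contains this information too (it is exactly the condition for the sup-bound step to be nearly tight), though you do not state it.
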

\begin{proof}[Proof of Theorem 4.] Let $f$ be given. We consider a new function
$$ g(x) := \sup_{0 \leq z \leq x}{|f(z)|}.$$
Clearly $g(0) = 0$. Furthermore $|g(x)| \geq |f(x)|$, $g$ is differentiable a.e. and $|g'(x)| \leq |f'(x)|$. It thus suffices to prove the bound for a monotonically increasing function $g(x)$. 
Since $g(x)$ is monotonically increasing, we will now re-interpret the weighted derivative as a Riemann-Stieltjes integral (see e.g. \cite{gordon})
$$ \int_{0}^{1}{g'(x)\nu(x) dx} = \int_{0}^{1}{\nu(x) dg(x)}$$
and prove the inequality for this generalized notion. All subsequent
arguments could be equally carried out by taking limits of mollifications (which is more cumbersome). Furthermore, note that
equality will not usually not be assumed for $C^1$-functions because generically the extremizer is merely in BV and has a discontinuity. The
remainder of the argument makes crucial use of linearity of the statement: 
suppose 
$$ \int_{0}^{1}{g(x)\nu(x) dx} \leq \left(\max_{0 \leq x \leq 1}{\frac{1}{\nu(x)}\int_{x}^{1}{\nu(z)dz}}\right)  \int_{0}^{1}{\nu(x)dg(x)}$$
holds for a set $\mathcal{A}$ of monotonically increasing functions vanishing at the origin. If $h_1, h_2 \in A$, then
$$ a h_1 + b h_2 \in \mathcal{A} \qquad \mbox{for all} \quad a, b \geq 0.$$
We will prove the inequality for translations of the heaviside function; for any $0 < z < 1$ let
 $$ H_{z}(x) = \begin{cases}
            0 \qquad &\mbox{if}~x \leq z \\
            1 \qquad &\mbox{if}~x > z.
           \end{cases}$$
By definition, we have
$$ \int_{0}^{1}{H_{z}(x)\nu(x) dx} = \int_{z}^{1}{\nu(x) dx} \qquad \mbox{and} \qquad  \int_{0}^{1}{\nu(x)dH_z(x)} = \nu(z)$$
from which the validity of the inequality for these functions follows immediately. However, we can now write $g(x)$ as a linear superposition of shifted 
Heaviside functions 
$$ g(\cdot) = \int_{0}^{1}{H_z(\cdot) d g(z)}$$
and linearity implies the result.
\end{proof}
The proof actually shows more: it shows that 'near-extremizers' can be characterized: they can only increase at places, where the isoperimetric ratio
is 'large' i.e. on
$$ A = \left\{z \in [0,1]: \frac{1}{\nu(z)}\int_{z}^{1}{\nu(y)dy} \geq (1-\varepsilon)\max_{0 \leq x \leq 1}{\frac{1}{\nu(x)}\int_{x}^{1}{\nu(z)dz}}\right\}.$$
Note that $A$ is always non-empty because of compactness. If $A = \left\{x_0\right\}$ contains only one element, then there exists a unique
extremizer to the inequality, which is given by $H_{x_0}$.

\begin{theorem} Let $\nu:[0,1] \rightarrow \mathbb{R}_{+}$ be a nonvanishing continuous weight on the closed unit interval.
 Let $f:[0,1] \rightarrow \mathbb{R}$ be once differentiable and have vanishing weighted mean value
$$ \int_{0}^{1}{f(x)\nu(x)dx} = 0.$$
 Then
$$ \int_{0}^{1}{|f(x)|\nu(x) dx} \leq \left(\max_{0 \leq x \leq 1}{\frac{2}{\nu(x)}\frac{\left(\int_{0}^{x}{\nu(z)dz}\right)\left(\int_{x}^{1}{\nu(z)dz}\right)}{\int_{0}^{1}{\nu(z)dz}}}\right)
\int_{0}^{1}{|\nabla f(x)|\nu(x) dx}$$
and the constant is sharp.
\end{theorem}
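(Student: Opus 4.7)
The plan is to follow the same superposition-of-building-blocks philosophy as the proof of the previous theorem, but with the shifted Heavisides replaced by a one-parameter family of mean-zero step functions adapted to $\nu$. For each $z \in (0,1)$ I will work with
$$H_z(x) := \chi_{[z,1]}(x) - \frac{\int_z^1 \nu(y)\,dy}{\int_0^1 \nu(y)\,dy},$$
which satisfies $\int_0^1 H_z\,\nu\,dx = 0$ by construction. A direct computation gives
$$\int_0^1 |H_z(x)|\,\nu(x)\,dx \;=\; \frac{2\,\bigl(\int_0^z\nu\bigr)\bigl(\int_z^1\nu\bigr)}{\int_0^1\nu},$$
while the distributional derivative of $H_z$ is a unit Dirac at $z$, so the corresponding weighted total variation $\int_0^1 \nu\,dH_z$ equals $\nu(z)$. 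The ratio of these two quantities is exactly the expression whose maximum over $z$ appears on the right-hand side of the theorem, so the $H_z$ already determine what the inequality has to match and supply the natural candidate extremizer.

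The second step is the representation
$$f(x) = \int_0^1 H_z(x)\,f'(z)\,dz$$
valid for every $C^1$ function $f$ with vanishing weighted mean. This is obtained by starting from the fundamental theorem of calculus $f(x) - f(0) = \int_0^1 \chi_{[z,1]}(x)\,f'(z)\,dz$, substituting $\chi_{[z,1]}(x) = H_z(x) + \int_z^1 \nu / \int_0^1 \nu$, and then integrating the resulting identity against $\nu(x)\,dx$; the two mean-zero conditions on $f$ and on each $H_z$ force the residual constants to cancel. With the representation in hand, the inequality follows at once from the triangle inequality inside the outer $dz$ integral, the Fubini swap, and the computation above:
$$\int_0^1 |f(x)|\,\nu(x)\,dx \;\leq\; \int_0^1 \left(\frac{2}{\nu(z)}\cdot \frac{\bigl(\int_0^z\nu\bigr)\bigl(\int_z^1\nu\bigr)}{\int_0^1\nu}\right)\nu(z)\,|f'(z)|\,dz,$$
and the claim follows by pulling the maximum of the bracketed factor out of the integral. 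Sharpness is witnessed by a mollification of $H_{z^*}$, where $z^*$ realises the maximum, in complete parallel to the sharpness reasoning in the preceding proof.

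The main subtlety, compared to the Dirichlet case, is the representation step: one has both to engineer $H_z$ so that each building block individually lies in the mean-zero class, and then to invoke the vanishing-mean hypothesis on $f$ itself to cancel the stray additive constant produced by the fundamental theorem of calculus. Once this is done, no monotone rearrangement is needed, since the triangle inequality applied to the superposition identity handles signed $f$ directly; BV extremizers can be accommodated by the Riemann-Stieltjes and mollification framework already introduced in the previous proof.
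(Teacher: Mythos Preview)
Your argument is correct and is exactly the kind of proof the paper has in mind: it only says that Theorem~6 ``can be proven in a similar way as Theorem~4 (or even be deduced from Theorem~4); we leave it to the reader as an exercise,'' and your superposition of mean--zero step functions $H_z$ is precisely the natural analogue of the shifted Heavisides used there. The one pleasant refinement worth noting is that by building the vanishing weighted mean into each $H_z$ and then using the exact representation $f=\int_0^1 H_z\,f'(z)\,dz$, you can apply the triangle inequality directly and avoid the monotone--envelope step of Theorem~4, which would not preserve the mean--zero constraint anyway.
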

This statement can be proven in a similar way as Theorem 4 (or even be deduced from Theorem 4); we leave it to the reader as an exercise.\\

\textbf{Acknowledgments.} I am very grateful to Andrea Cianchi, Isaac Chavel, Laszlo Lov\'{a}sz and Frank Morgan for valuable 
contributions to the history of the problems and to Florian Pausinger for a critical reading of the manuscript. 
I am especially grateful to Bernd Kawohl for his continued efforts and for sharing his extensive knowledge on these matters.


\begin{thebibliography}{4}

\bibitem{acosta} G. Acosta, R. Dur\'{a}n, An optimal Poincar\'{e} inequality in $L^1$ for convex domains,  Proc. Amer. Math. Soc. 132 (2004), 195--202.

\bibitem{bokowski} J. Bokowski and E. Sperner, Zerlegung konvexer K\"{o}rper durch minimale Trennfl\"{a}chen (German), J. Reine Angew. Math. 311/312 (1979), 80--100.

\bibitem{bokowski2} J. Bokowski, Ungleichungen fur den Inhalt von Trennfl\"{a}chen (German), Arch. Math. (Basel) 34 (1980), no. 1, 84--89.

\bibitem{cheeger} J. Cheeger, A lower bound for the smallest eigenvalue of the Laplacian, Problems in analysis (Papers dedicated to Salomon Bochner, 1969), Princeton Univ. Press, 195--199. 

\bibitem{Cianchi} A. Cianchi. A sharp form of Poincar\'{e} inequalities on balls and spheres. Z. Angew. Math. Phys. 40, 558 - 569, 1989.

\bibitem{cianchi2} A. Cianchi, A sharp trace inequality for functions of bounded variation in the ball. Proc. Roy. Soc. Edinburgh Sect. A 142 (2012), no. 6, 1179--1191. 

\bibitem{conw} J. H. Conway and N.J.A. Sloane,
Sphere packings, lattices and groups. 
Third edition. With additional contributions by E. Bannai, R. E. Borcherds, J. Leech, S. P. Norton, A. M. Odlyzko, R. A. Parker, L. Queen and B. B. Venkov. Grundlehren der Mathematischen Wissenschaften, 290. Springer-Verlag, New York, 1999.

\bibitem{dyer} M. Dyer, A. Frieze, Computing the volume of convex bodies: a case where randomness provably helps, Proc. Sympos. Appl. Math., 44, Amer. Math. Soc., Providence, RI, 1991. 

\bibitem{esp} L. Esposito, V. Ferone, B. Kawohl, C. Nitsch and C. Trombetti, The longest shortest fence and sharp Poincaré-Sobolev inequalities. 
Arch. Ration. Mech. Anal. 206 (2012), no. 3, 821-851. 

\bibitem{federer} H. Federer, Geometric measure theory, Die Grundlehren der mathematischen Wissenschaften 153, Springer, 1969.

\bibitem{ferone} V. Ferone, C. Nitsch and C. Trombetti,
A remark on optimal weighted Poincar\'{e} inequalities for convex domains.
Atti Accad. Naz. Lincei Cl. Sci. Fis. Mat. Natur. Rend. Lincei (9) Mat. Appl. 23 (2012), no. 4, 467--475. 

\bibitem{gordon} R. Gordon, The integrals of Lebesgue, Denjoy, Perron, and Henstock. Graduate Studies in Mathematics, 4. American Mathematical Society, Providence, RI, 1994.

\bibitem{gro} M. Gromov and V.D. Milman, Generalization of the spherical isoperimetric inequality to uniformly convex Banach spaces. 
Compositio Math. 62 (1987), no. 3, 263--282. 

\bibitem{gysin} L. Gysin, Inequalities for the product of the volumes of a partition determined in a convex body by a surface,  Rend. Circ. Mat. Palermo (2) 35  (1986), no. 3, 420--428. 

\bibitem{kan} M. Dyer, A. Frieze, R. Kannan, A Random Polynomial Time Algorithm for Approximating the Volume of a Convex Body, Proc. of the 21st ACM Symposium on Theory
of Computing, 375--381.

\bibitem{kannan} R. Kannan, L. Lov\'{a}sz, M. Simonovits, Isoperimetric problems for convex bodies and a localization lemma, Discrete Comput. Geom. 13  (1995), no. 3-4, 541--559. 

\bibitem{kar} A. Karzanov, L. Khachiyan, On the conductance of order Markov chains, Order 8 (1991), no. 1, 7--15. 

\bibitem{kaw} B. Kawohl, V. Fridman, Isoperimetric estimates for the first eigenvalue of the p-Laplace operator and the Cheeger constant. Comment. Math. Univ. Carolin. 44 (2003), no. 4, 659--667. 

\bibitem{lefton} L. Lefton and D. Wei, Numerical approximation of the first eigenpair of the p-Laplacian using finite elements and the penalty method. Numer. Funct. Anal. Optim. 18 (1997), no. 3--4, 389--399. 

\bibitem{simon} L. Lov\'{a}sz, M. Simonovits, Random walks in a convex body and an improved volume algorithm,  Random Structures Algorithms 4 (1993), no. 4, 359--412. 

\bibitem{mao} Q. Mao, On an inequality related to the splitting of a convex body by a plane, Geom. Dedicata 47 (1993), no. 2, 237--239. 

\bibitem{payne} L. Payne, H. Weinberger, An optimal Poincar\'{e} inequality for convex domains, Arch. Rational Mech. Anal. 5 (1960), 286--292. 

\bibitem{santalo} L. Santal\'{o}, An inequality between the parts into which a convex body is divided by a plane section,  Rend. Circ. Mat. Palermo (2) 32 (1983), no. 1, 124--130. 

\bibitem{val} D. Valtorta, Sharp estimate on the first eigenvalue of the $p-$Laplacian. Nonlinear Anal. 75 (2012), no. 13, 4974-4994. 

\bibitem{yau} S.-T. Yau, Isoperimetric constants and the first eigenvalue of a compact Riemannian manifold. Ann. Sci. École Norm. Sup. (4) 8 (1975), no. 4, 487-507. 

\end{thebibliography}
\end{document}